\def\ZZ{{\mathbb Z}}
\def\RR{{\mathbb R}}
\def\QQ{{\mathbb Q}}
\def\rank{\operatorname{rank}}
\def\cone{\operatorname{cone}}
\def\conv{\operatorname{conv}}
\def\gp{\operatorname{gp}}
\def\ed{\operatorname{ed}}
\def\vmult{\operatorname{vmult}}
\newtheorem{theorem}{Theorem}
\newtheorem{proposition}[theorem]{Proposition}
\theoremstyle{definition}
\newtheorem{remark}[theorem]{Remark}
\numberwithin{equation}{section}
\begin{document}

\title{The computation of generalized Ehrhart series in Normaliz}
\author[W. Bruns]{Winfried Bruns}
\address{Winfried Bruns\\ Universit\"at Osnabr\"uck\\ FB Mathematik/Informatik\\ 49069 Osna\-br\"uck\\ Germany}
\email{wbruns@uos.de}

\author[C. S\"oger]{Christof S\"oger}
\address{Christof S\"oger\\ Universit\"at Osnabr\"uck\\ FB Mathematik/Informatik\\ 49069 Osna\-br\"uck\\ Germany}
\email{csoeger@uos.de}

\subjclass[2010]{52B20, 13F20, 14M25, 91B12}

\begin{abstract}
we describe an algorithm for the computation of generalized (or
weighted) Ehrhart series based on Stanley decompositions as
implemented in the offspring NmzIntegrate of Normaliz. The
algorithmic approach includes elementary proofs of the basic
results. we illustrate the computations by examples from
combinatorial voting theory.
\end{abstract}

\maketitle

Let $M\subset \ZZ^n$ be an affine monoid endowed with a
positive $\ZZ$-grading $\deg$. Then the \emph{Ehrhart} or
\emph{Hilbert series} is the generating function
$$
E_M(t)=\sum_{x\in M} t^{\deg x}=\sum_{k=0}^\infty \#\{x\in M:\deg x=k\}t^k,
$$
and $E(M,k)=\#\{x\in M:\deg x=k\}$ is the Ehrhart or Hilbert
function of $M$ (see \cite{BG} for terminology and basic
theory). It is a classical theorem that $E_M(t)$ is the power
series expansion of a rational function of negative degree at
$t_0=0$ and that $E(M,k)$ is given by a quasipolynomial of
degree $\rank M-1$ with constant leading coefficient equal to
the (suitably normed) volume of the rational polytope
$$
P=\cone(M)\cap A_1
$$
where $\cone(M)\subset \RR^n$  is the cone generated by $M$ and
$A_1$ is the hyperplane of degree $1$ points. In the following
we assume that
$$
M=\cone(M)\cap L
$$
for a sublattice $L$ of $\ZZ^n$. Then $E(M,k)$ counts the
$L$-points in the multiple $kP$, and is therefore the Ehrhart
function of $P$ (with respect to $L$).

Monoids of the type just introduced are important for
applications, and in some of them, like those discussed in
Section \ref{Compute}, one is naturally led to consider
\emph{generalized} (or weighted) \emph{Ehrhart series}
$$
E_{M,f}(t)=\sum_{x\in M}f(x)t^{\deg x}
$$
where $f$ is a polynomial in $n$ indeterminates. It is
well-known that also the generalized Ehrhart series is the
power series expansion of a rational function; see \cite{BB1},
\cite{BB2}.

In the last months we have implemented an offspring of Normaliz
\cite{Nmz} called  NmzIntegrate\footnote{NmzIntegrate will be
uploaded to \cite{Nmz} together with Normaliz 2.9 by February
2013.} that computes generalized Ehrhart series. The input
polynomials of NmzIntegrate must have rational coefficients,
and we assume that $f$ is of this type although it is
mathematically irrelevant. This note describes the computation
of generalized Ehrhart series based on Stanley decompositions
\cite{Sta1}. Apart from taking the existence of Stanley
decompositions as granted, we give complete and very elementary
proofs of the basic facts. They follow exactly the
implementation in NmzIntegrate (or vice versa).

The generalized Ehrhart function is given by a quasipolynomial
$q(k)$ of degree $\le \deg f+\rank M-1$, and the coefficient of
$k^{\deg f+\rank M-1}$ in $q(k)$ can easily be described as the
integral of the highest homogeneous component of $f$ over the
polytope $P$. Therefore we have also included (and implemented)
an approach to the computation of integrals of polynomials over
rational polytopes in the spirit of the Ehrhart series
computation. See \cite{BB2} and \cite{Latte} for more
sophisticated approaches.

\emph{Acknowledgement.}\enspace We gratefully acknowledge the
support we received from John Abbott and Anna Bigatti in using
CoCoALib \cite{CoC}, on which the multivariate polynomial
algebra in NmzIntegrate is based.

\section{The computation of generalized Ehrhart series}

Via a Stanley decomposition and substitution the computation of
generalized Ehrhart series can be reduced to the case in which
$M$ is a free monoid, and for free monoids one can split off
the variables of $f$ successively so that one ends at the
classical case $M=\ZZ_+$. We take the opposite direction,
starting from $\ZZ_+$.

\subsection{The monoid $\ZZ_+$} Let $M=\ZZ_+$. By linearity it is enough to consider the polynomials $f(k)=k^m$, $k\in\ZZ_+$, for which the generalized Ehrhart series is given by
$$
\sum_{k=0}^\infty k^m t^{um}, \qquad u=\deg 1,
$$
and if necessary we can assume $u=1$, substituting $t\mapsto
t^u$ in the final result.

The rising factorials
$$
(k+1)_m=(k+1)\cdots(k+m)
$$
form a $\ZZ$-basis of the polynomial ring $\ZZ[k]$. Therefore
we can write
\begin{equation}\label{Stirling}
k^m=\sum_{j=0}^m s_{m,j}(k+1)_j
\end{equation}
and use that
\begin{equation}\label{risfact}
\sum_{k=0}^\infty (k+1)_r\,t^k = \sum_{j=r}^\infty (t^{j})^{(r)} =
\sum_{j=0}^\infty (t^{j})^{(r)} = \biggl(\frac1{1-t}\biggr)^{(r)}=\frac{r!}{(1-t)^{r+1}}.
\end{equation}
Equations \eqref{Stirling} and \eqref{risfact} solve our
problem for $M=\ZZ_+$ and $f(k)=k^m$:
\begin{equation}\label{onevar}
\sum_{k=0}^\infty k^m t^{uk}=\frac{A_{m,u}(t)}{(1-t^u)^{m+1}},\qquad A_{m,u}(t)\in\ZZ[t].
\end{equation}
It is enough to compute $A_{m,1}(t)$ because
$A_{m,u}(t)=A_{m,1}(t^u)$. One should note that $A_{m,u}$ is a
polynomial of degree $m$. Therefore the rational function in
\eqref{onevar} has negative degree.

Since the coefficient $s_{m,m}$ of $(k+1)_m$ in the
representation of $k^m$ is evidently equal to~$1$, we have
\begin{equation}\label{mult1}
\sum_{k=0}^\infty k^m t^{um}=\frac{m!}{(1-t)^{m+1}}+
\textup{terms of smaller pole order at $t=1$}
\end{equation}

\begin{remark}\label{classical}
The coefficients $s_{m,j}$ in \eqref{Stirling} and the
coefficients of the polynomials $A_{m,1}$ are well-known
combinatorial numbers.

(a)  $s_{m,j}=(-1)^{m-j}S(m+1,j+1)$ where $S(p,q)$ is the
Stirling number of the second kind that counts the number of
partitions of a $p$-set into $q$ blocks. This follows
immediately from the classical identity
$k^{m+1}=\sum_{j=1}^{m+1}(-1)^{m+1-j}S(m+1,j)(k)_j$ (for
example, see Stanley \cite[4.3,c]{Sta}).

(b) For $m=0$ we have $A_{0,1}=1$ and $A_{m,1}=\sum_{j=1}^m
A(m,j)t^j$ for $m>0$ where $A(m,j)$ is the Eulerian number
\cite[4.3,d]{Sta}.
\end{remark}

\subsection{The monoid $\ZZ_+^d$}
Next we consider $M=\ZZ_+^d$. The crucial observation is that
the problem is multiplicative for products of polynomials in
disjoint variables. Suppose that $f(x)=g(y)h(z)$,
$y=(x_1,\dots,x_r)$, $z=(x_{r+1},\dots,x_d)$. Then
\begin{equation}\label{product}
E_{M,f}(t)=\sum_{x\in \ZZ_+^d}f(x)t^{\deg x}=\biggl(\sum_{y\in
\ZZ_+^r}g(y)t^{\deg y}\biggr)\biggl(\sum_{z\in \ZZ_+^{d-r}}h(z)t^{\deg z}\biggr)
\end{equation}
by multiplication of power series.

In order to exploit \eqref{product} we split the last variable
off,
$$
f(x)=\sum_i f_i(x_1,\dots,x_{d-1})x_d^i,
$$
and obtain
\begin{align}
E_{M,f}(t)&=\sum_i\left(\biggl(\sum_{x'\in \ZZ_+^{d-1}}f_i(x')t^{\deg x'}\biggr)
\biggl(\sum_{k=0}^\infty k^it^{ui}\biggr)\right)\nonumber\\
&=\sum_i\left( \frac{A_{i,u}(t)}{(1-t^u)^{i+1}}\sum_{x'\in \ZZ_+^{d-1}}f_i(x')t^{\deg x'}\right)
\label{reduct}
\end{align}
with $u=\deg e_d$.

Applying this formula inductively allows us to eliminate all
variables $x_i$ and to end with the desired representation of
$E_{\ZZ_+^d,f}(t)$.

Generalizing \eqref{mult1}, let us consider the case in which
$f$ is a monomial, $f(x_1,\dots,x_d)=x_1^{m_1}\cdots
x_d^{m_d}$, and $\ZZ_+^d$ is endowed with its \emph{standard
degree}, $\deg(x)=x_1+\dots+x_d$. Then equations
\eqref{product} and \eqref{mult1} imply that
\begin{equation}\label{mult2}
E_{M,f}(t)=\frac{m_1!\cdots m_d!}{(1-t)^{m_1+\dots+m_d+d}}+
\textup{terms of smaller pole order at $t=1$}.
\end{equation}

\subsection{Using the Stanley decomposition}

We now turn to general $M \subset \ZZ^n$. Normaliz computes a
triangulation $\Sigma$ of $\cone(M)$ into simplicial subcones
$\sigma$. Moreover, it computes a \emph{disjoint} decomposition
$$
\cone(M)=\bigcup_{\sigma\in\Sigma} \sigma\setminus S_\sigma
$$
where $S_\sigma$ is a union of facets of $\sigma$. The
existence of such a decomposition is a nontrivial fact.
Classically it is derived from the Brugesser-Mani theorem on
the existence of line shellings (see Stanley \cite{Sta1}).
Instead of a line shelling, Normaliz (now) uses a method of
Köppe and Verdoolaege: see \cite{KV} and \cite[Section 4]{BIS}.

Every simplicial subcone (of full dimension) is generated by
linearly independent vectors $v_1,\dots,v_d\in M$, $d=\rank M$.
They generate a free submonoid $M_\sigma$ of $M$. For
every~$\sigma$ Normaliz computes the set
$$
E_\sigma=\bigl\{x\in\gp(M): x=\alpha_1v_1+\dots+\alpha_dv_d,\ \alpha_i\in [0,1)\bigr\}.
$$
For $x\in E_\sigma$ we let $\epsilon(x)$ be the sum of those
$v_i$ for which (i) $\alpha_i=0$ and (ii) the facet of $\sigma$
opposite to $v_i$ lies in the excluded set $S_\sigma$ (so that
$x$ lies in the excluded set). Then it is not hard to see that
we have a disjoint decomposition
$$
M=\bigcup_{\sigma\in\Sigma}\  \bigcup_{x\in E_\sigma} x+\epsilon(x)+M_\sigma.
$$
It is called a \emph{Stanley decomposition} since its existence
is originally due to Stanley \cite{Sta1}.

In the following we set $\widetilde x=x+\epsilon(x)$ and
$$
N_{\sigma,x}=\widetilde x+M_\sigma.
$$
Then
$$
E_{M,f}(t)=\sum_\sigma\sum_{x\in E_\sigma} E_{N_{\sigma,x},f}(t).
$$
Set $d=\rank M$, and for given $\sigma$ consider the linear map
$$
\alpha_\sigma:\ZZ_+^d \to \ZZ^n,\qquad \alpha_\sigma(y_1,\dots,y_d)=y_1v_1+\dots+\dots+y_dv_d,
$$
where $v_1,\dots,v_d$ is the generating set of $M_\sigma$ as
above. With
\begin{align}
\deg_\sigma y=\deg \alpha_\sigma(y),\nonumber\\
g_{\sigma,x}(y)=f\bigl(\alpha_\sigma(y)+\widetilde x\bigr),\label{transform_eq}
\end{align}
we have
$$
E_{N_{\sigma,x},f}(t)=t^{\deg \widetilde x}\sum_{y\in\ZZ_+^d}g_{\sigma,x}(y)t^{\deg_\sigma y}.
$$
This equation transforms the summation over $N_{\sigma,x}$ into
a summation over $\ZZ_+^d$. Then we can apply \eqref{reduct}
inductively to
\begin{equation}\label{sigma}
\widetilde E_{\sigma,f}(t)=\sum_{x\in E_\sigma} E_{N_{\sigma,x},f}(t).
\end{equation}
Finally, we sum the rational functions $\widetilde
E_{\sigma,f}(t)$ over the triangulation $\Sigma$.

\begin{remark}
(a) Instead of applying \eqref{reduct} to every $\sigma$, we
accumulate the polynomials $g_{\sigma,x}$ over all $\sigma$
that induce the same degree $\deg_\sigma$ on $\ZZ^d$ (the
classes formed in this way are called \emph{denominator
classes}).

(b) The time critical steps in the algorithm are
\begin{enumerate}
\item the coordinate transformation \eqref{transform_eq},
    and
\item the inductive application of \eqref{reduct}.
\end{enumerate}
In order to speed up (1), we factor the polynomial $f$,
transform the factors separately, and multiply the transformed
factors. If $f$ happens to decompose into linear factors, then
multiplication of linear polynomials becomes a time critical
step. In order to speed up (2) we have introduced the
denominator classes.

(c) Note that $\sum_{y\in\ZZ_+^d}g_{\sigma,x}(y)t^{\deg_\sigma
y}$ is invariant under permutations of variables $y_i$ that
preserve the degrees $\deg_\sigma e_i$. Therefore one can go
over $g_{\sigma,x}$ monomial by monomial and reorder the
exponent vectors in such a way that the exponents of variables
corresponding to the same degree become decreasing. The
reordering significantly  reduces the number of monomials in
the polynomials to which \eqref{reduct} must be applied, saves
memory and also speeds up \eqref{reduct}.

(d) We want to point out that \eqref{reduct} is \emph{not}
applied recursively. Instead the right hand side is expanded
after the elimination of $x_d$, and $x_{d-1}$ is then
eliminated from the resulting polynomial whose coefficients are
rational functions in $t$. This procedure is repeated until all
$x_i$ have been eliminated.

\end{remark}

\section{The quasipolynomial, its virtual leading coefficient,
and integration}

\subsection{The quasipolynomial} All rational functions in $t$
that come up in \eqref{sigma} can be written over the
denominator
$$
(1-t^\ell)^{\deg f+\rank M}
$$
where $\ell$ is the least common multiple of the numbers $\deg
x$ for the generators $x$ of $M$ that appear in the
triangulation. This follows from \eqref{reduct} if one observes
that $1-t^u$ divides $1-t^\ell$. Moreover, all summands have
negative degree as rational functions in $t$. Therefore
\cite[4.4.1]{Sta} implies the following proposition.

\begin{proposition}\label{quasi}
$$
E_{M,f}(t)=\sum_{k=0}^\infty q(k)t^k
$$
where $q$ is a rational quasipolynomial of period $\pi$
dividing $\ell$ and of degree $\le\deg f+\rank M-1$.
\end{proposition}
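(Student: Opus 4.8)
The plan is to extract from the explicit rational expression for $E_{M,f}(t)$ built up in Section~1 the two hypotheses required to invoke \cite[4.4.1]{Sta}: a denominator of the form $(1-t^\ell)^{\deg f+\rank M}$, and negative degree as a rational function. Combining the Stanley decomposition with \eqref{sigma} and the coordinate transformation \eqref{transform_eq} one has
$$
E_{M,f}(t)=\sum_{\sigma}\sum_{x\in E_\sigma}t^{\deg\widetilde x}\sum_{y\in\ZZ_+^d}g_{\sigma,x}(y)\,t^{\deg_\sigma y},
$$
and, expanding each $g_{\sigma,x}$ into monomials $y_1^{a_1}\cdots y_d^{a_d}$ with $a_1+\dots+a_d\le\deg f$ and using the multiplicativity \eqref{product} together with the one-variable identity \eqref{onevar} in each variable $y_i$ (with $u_i=\deg_\sigma e_i=\deg v_i\ge 1$), one displays $E_{M,f}(t)$ as a $\QQ$-linear combination of the rational functions
$$
t^{\deg\widetilde x}\prod_{i=1}^{d}\frac{A_{a_i,u_i}(t)}{(1-t^{u_i})^{a_i+1}}.
$$

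For the denominator I would note that $(1-t^{u_i})$ divides $(1-t^\ell)$ because $u_i\mid\ell$, and that the pole order of each such summand is $\sum_{i=1}^{d}(a_i+1)=(a_1+\dots+a_d)+d\le\deg f+\rank M$; passing to the common denominator $(1-t^\ell)^{\deg f+\rank M}$ only multiplies the numerators by further powers of $1-t^\ell$. So $E_{M,f}(t)=P(t)/(1-t^\ell)^{\deg f+\rank M}$ with $P\in\QQ[t]$, and the claim that the period $\pi$ divides $\ell$ will then be immediate from \cite[4.4.1]{Sta}, since $\ell$ is the only exponent occurring in the denominator.

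The essential point is negative degree, and this is where I expect the only genuine work. Each factor $A_{m,u}(t)/(1-t^u)^{m+1}$ has negative degree (recorded right after \eqref{onevar}), hence so does a product of $d$ of them and, taking the $\QQ$-linear combination, so does $\sum_{y\in\ZZ_+^d}g_{\sigma,x}(y)t^{\deg_\sigma y}$. What must still be checked is that the monomial prefactor $t^{\deg\widetilde x}$ does not undo this, i.e. that $\deg\widetilde x<\sum_i\deg v_i$ for every $\sigma$ and every $x\in E_\sigma$. Writing $\widetilde x=x+\epsilon(x)=\sum_i\gamma_iv_i$, one has $\gamma_i=\alpha_i\in[0,1)$ unless the facet of $\sigma$ opposite $v_i$ lies in $S_\sigma$, in which case $\gamma_i=1$; therefore $\sum_i\deg v_i-\deg\widetilde x=\sum_i(1-\gamma_i)\deg v_i$ is strictly positive provided $S_\sigma$ does not contain \emph{all} facets of $\sigma$. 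That this never happens is a property of the line shelling --- equivalently, of the construction of \cite{KV} --- underlying the Stanley decomposition, and it is the one non-formal ingredient I would isolate and prove. Granting it, every summand, and hence $E_{M,f}(t)$, is a rational function of negative degree.

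With the denominator $(1-t^\ell)^{\deg f+\rank M}$ and negative degree in hand, \cite[4.4.1]{Sta} applies verbatim and yields $E_{M,f}(t)=\sum_{k\ge0}q(k)t^k$ with $q$ a quasipolynomial of period dividing $\ell$ and of degree at most $(\deg f+\rank M)-1$; that the values of $q$ are rational is automatic, since $f$ and all the data $A_{m,u}$, $\deg v_i$, $\widetilde x$, $E_\sigma$ feeding the computation are rational. So the main obstacle is really just the negative-degree verification --- concretely, the combinatorial fact that no simplicial cone in the decomposition has its entire boundary excluded --- with everything else being bookkeeping plus the quoted theorem.
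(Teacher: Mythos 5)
Your argument is correct and is essentially the paper's own proof: use the Stanley decomposition, \eqref{transform_eq} and \eqref{reduct} to display $E_{M,f}(t)$ as a $\QQ$-linear combination of rational functions that can be put over the denominator $(1-t^\ell)^{\deg f+\rank M}$ (since each $1-t^{u}$ divides $1-t^{\ell}$ and the pole orders are at most $\deg f+\rank M$), verify negative degree, and quote \cite[4.4.1]{Sta}. The one place you go beyond the text --- noting that negative degree needs $\deg\widetilde x<\deg v_1+\dots+\deg v_d$, i.e.\ that no $\sigma$ has all of its facets in $S_\sigma$ --- is a real point the paper passes over silently, and it does hold: in the decomposition of \cite{KV} used by Normaliz a facet is excluded exactly when the generic order vector lies on its negative side, so excluding all $d$ facets would put that vector in the interior of $-\sigma$, which is impossible since it lies in the pointed cone $\cone(M)$ (and in the shelling picture, only the last cell of a sphere is attached along its entire boundary, whereas a triangulated cone cross-section is a ball).
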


The statement about the quasipolynomial means that there exist
polynomials $q^{(j)}$, $j=0,\dots,\pi-1$, of degree $\le\deg
f+\rank M-1$ such that
$$
q(k)=q^{(j)}(k),\qquad j\equiv k\pod \pi,
$$
and
$$
q^{(j)}(k)=q^{(j)}_0+q^{(j)}_1k+\dots+q^{(j)}_{\deg f+\rank M-1}k^{\deg f+\rank M-1}
$$
with coefficients $q^{(j)}_i\in \QQ$. As we will see below, it
is justified to call
$$
\ed(M,f)=\deg f+\rank M-1
$$
the \emph{expected degree} of $q$.

\subsection{The virtual leading coefficient and Lebesgue
integration}

Let $m=\deg f$ and write $f=f_m+g$ where $f_m$ is the degree
$m$ homogeneous component of $m$. Then $\deg g<m$, and it
follows from Proposition \ref{quasi} that $g$ does not
contribute to the coefficient $q^{(j)}_{\ed(M,f)}$. Moreover,
this coefficient is independent of $j$ and given by an
integral, as we will see in Proposition \ref{inte} below.

For the representation as an integral we must norm the measure
in such a way that it is compatible with the lattice structure.
We will integrate over the polytope
$$
P=\cone(M)\cap A_1,\qquad A_1=\{x\in\RR^n:\deg x=1\}.
$$
Let $L_0=L\cap \RR M\cap A_0$ where $A_0=\{x\in\RR^n:\deg
x=0\}$ is the linear subspace of degree $0$ elements. Then
$L_0$ is a (saturated) sublattice of $L$ of rank $d-1$
($d=\rank M$), and we choose a basis $u_1,\dots,u_{d-1}$ of
$L_0$. Note that $H=\RR M\cap A_1$ has dimension $d-1$ and
contains a point $z\in L$ since we have required that $\deg$
takes the value $1$ on $\gp(M)$, and we can consider the
\emph{basic $L_0$-simplex}
$\delta=\conv(z,z+u_1,\dots,z+u_{d-1})$ in $H$. Now we norm the
Lebesgue measure $\lambda$ on $H$ by giving  volume $1/(d-1)!$
to the basic $L_0$-simplex. (The measure is independent of the
choice of $\delta$ since two basic $L_0$-simplices differ by an
affine-integral automorphism of $H$.) We call $\lambda$ the
\emph{$L$-Lebesgue measure} on $H$.

\begin{proposition}\label{inte}
For all $j=0,\dots,\pi-1$ one has
\begin{equation}\label{lcinte}
q^{(j)}_{\ed(M,f)}=\int_P f_m\,d\lambda.
\end{equation}
\end{proposition}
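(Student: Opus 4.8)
My plan is to combine additivity over the Stanley decomposition with the one‑variable formulas of Section~1 and a direct evaluation of $\int_Pf_m\,d\lambda$ over the simplices of the triangulation. By the remark preceding the proposition both sides depend only on $f_m$, so I may assume $f=f_m$, homogeneous of degree $m$; write $d=\rank M$, so $m+d-1=\ed(M,f)$. For $\sigma\in\Sigma$ with generating set $v_1,\dots,v_d$ of $M_\sigma$ put $P_\sigma=\cone(v_1,\dots,v_d)\cap A_1=\conv(v_1/g_1,\dots,v_d/g_d)$, where $g_i=\deg v_i$. The $P_\sigma$ triangulate $P$ and the excluded parts $S_\sigma$ are $\lambda$‑null, so $\int_Pf_m\,d\lambda=\sum_\sigma\int_{P_\sigma}f_m\,d\lambda$. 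On the series side, \eqref{sigma} and the common‑denominator remark preceding Proposition~\ref{quasi} exhibit $E_{M,f}=\sum_\sigma\widetilde E_{\sigma,f}$ with each $\widetilde E_{\sigma,f}$ the generating function of a quasipolynomial of degree $\le\ed(M,f)$ and period dividing $\ell$. Hence $q^{(j)}_{\ed(M,f)}$ is the sum over $\sigma$ of the corresponding degree‑$\ed(M,f)$ coefficients on the class $k\equiv j$, and it suffices to show, for each $\sigma$, that this contribution is $\int_{P_\sigma}f_m\,d\lambda$, independently of $j$.

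Fix $\sigma$ and set $G=g_1\cdots g_d$, $c_\sigma=\gcd(g_1,\dots,g_d)$. For $x\in E_\sigma$ the polynomial $g_{\sigma,x}(y)=f_m(\alpha_\sigma(y)+\widetilde x)$ has, since $\alpha_\sigma$ is linear and $f_m$ is homogeneous, $y$‑top part $f_m(\alpha_\sigma(y))$. Because a monomial $y^a$ contributes in \eqref{onevar}–\eqref{reduct} a rational function whose pole at $t=1$ has order exactly $|a|+d$, the lower‑$y$‑degree terms of $g_{\sigma,x}$ and the factor $t^{\deg\widetilde x}$ do not affect the degree‑$\ed(M,f)$ coefficient; they only shift the residue class. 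Thus that coefficient for $E_{N_{\sigma,x},f}$ on the class $k\equiv j$ equals the one for $\Psi_\sigma(t):=\sum_{y\in\ZZ_+^d}f_m(\alpha_\sigma(y))\,t^{\deg_\sigma y}$ on the class $k\equiv j-\deg\widetilde x$. Writing $f_m(\alpha_\sigma(y))=\sum_{|a|=m}c^\sigma_a y^a$ and applying \eqref{onevar} variable by variable, $\Psi_\sigma(t)=\sum_{|a|=m}c^\sigma_a\prod_i A_{a_i,g_i}(t)(1-t^{g_i})^{-(a_i+1)}$, which is a function of $t^{c_\sigma}$ (as $g_i=c_\sigma h_i$) whose unique pole of maximal order $m+d$, after substituting $t^{c_\sigma}$ by a single variable, is at $t=1$, since $\gcd(h_i)=1$. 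Using $A_{a_i,g_i}(1)=A_{a_i,1}(1)=a_i!$ (from \eqref{mult1} and $A_{m,u}(t)=A_{m,1}(t^u)$) and $1-t^{g_i}\sim g_i(1-t)$, one extracts that the degree‑$\ed(M,f)$ coefficient of $\Psi_\sigma$ on the class $k\equiv r$ equals
$$
\frac{c_\sigma}{(m+d-1)!}\sum_{|a|=m}c^\sigma_a\frac{a_1!\cdots a_d!}{g_1^{a_1+1}\cdots g_d^{a_d+1}}\quad\text{if }c_\sigma\mid r,\qquad\text{and }0\ \text{otherwise.}
$$

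Now sum over $x\in E_\sigma$. Since $\deg\colon\gp(M)\to\ZZ$ is surjective and $\epsilon(x)$ is a $\ZZ$‑combination of the $v_i$, the induced map $\gp(M)/\gp(M_\sigma)\to\ZZ/c_\sigma\ZZ$ is onto, so the residues $\deg\widetilde x\bmod c_\sigma$, $x\in E_\sigma$, are equidistributed, each value occurring $|E_\sigma|/c_\sigma$ times. Hence the contribution of $\sigma$ to $q^{(j)}_{\ed(M,f)}$ is $j$‑independent and equals $\dfrac{|E_\sigma|}{(m+d-1)!}\displaystyle\sum_{|a|=m}c^\sigma_a\dfrac{a_1!\cdots a_d!}{g_1^{a_1+1}\cdots g_d^{a_d+1}}$. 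On the geometric side, a point of $P_\sigma$ with barycentric coordinates $\beta_1,\dots,\beta_d$ corresponds to $y_i=\beta_i/g_i$, so $f_m=\sum_a\bigl(c^\sigma_a/\prod_i g_i^{a_i}\bigr)\prod_i\beta_i^{a_i}$ on $P_\sigma$. By the Dirichlet simplex integral $\int_T\prod_i\beta_i^{a_i}\,d\lambda=\bigl(\prod_i a_i!/(m+d-1)!\bigr)(d-1)!\,\lambda(T)$ for a $(d-1)$‑simplex $T$ (reduce to the standard simplex and a beta integral) it then suffices to know $(d-1)!\,\lambda(P_\sigma)=|E_\sigma|/G$. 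This last identity comes from slicing the pyramid $Q=\conv(0,v_1/g_1,\dots,v_d/g_d)$ by degree: for the measure $\lambda\otimes ds$ on $\RR M$ (product of $\lambda$ with Lebesgue measure on the degree line), which is exactly the $\gp(M)$‑normalized Lebesgue measure, $\operatorname{vol}(Q)$ equals both $\int_0^1 s^{d-1}\lambda(P_\sigma)\,ds=\lambda(P_\sigma)/d$ and $\tfrac1{d!}G^{-1}\lvert\det_{\lambda\otimes ds}(v_1,\dots,v_d)\rvert=\tfrac1{d!}G^{-1}[\gp(M):\gp(M_\sigma)]=|E_\sigma|/(d!\,G)$. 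Comparing the two expressions for the contribution of $\sigma$ and summing over $\Sigma$ gives \eqref{lcinte}.

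The one‑variable Laurent computation is routine; the real effort is the normalization bookkeeping in the last paragraph — lining up the lattices $\gp(M_\sigma)\subseteq\gp(M)$, the behaviour of $\deg$, and the definition of the $L$‑Lebesgue measure so that the combinatorial constant from \eqref{reduct} matches the Dirichlet integral. The one conceptual point is the independence of $j$: a single piece $\widetilde x+M_\sigma$ with $c_\sigma>1$ is supported in one residue class mod $c_\sigma$ and has \emph{no} $j$‑independent leading coefficient; it is precisely the surjectivity of $\deg$ on $\gp(M)$, forcing the shifts $\deg\widetilde x$ over $E_\sigma$ to be equidistributed mod $c_\sigma$, that restores $j$‑independence after summation. (As a cross‑check the statement also follows analytically from $q(k)=\sum_{x\in kP\cap L}f(x)=k^{\ed(M,f)}\int_Pf_m\,d\lambda+o\bigl(k^{\ed(M,f)}\bigr)$, obtained by rescaling $x=ky$ and using that a fundamental domain of $\tfrac1kL_0$ in $\RR M\cap A_1$ has $\lambda$‑volume $k^{-(d-1)}$; this makes $j$‑independence transparent but is farther from the implementation.)
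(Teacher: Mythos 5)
Your argument is correct, but it takes a genuinely different route from the paper. The paper's proof is a short analytic one: since $M=\cone(M)\cap L$, the coefficient $q(k)$ is just $\sum_{y\in kP\cap L}f(y)$; writing $\int_Pf_m\,d\lambda$ as a limit of Riemann sums over the refined lattices $\frac1cL$ and rescaling by homogeneity ($f(x)=c^{-m}f(cx)$, $x\in P\cap\frac1cL \Leftrightarrow cx\in cP\cap L$) identifies the integral with $\lim_{c\to\infty}c^{-(m+d-1)}\sum_{y\in cP\cap L}f(y)$, and Proposition~\ref{quasi} gives $q^{(j)}_{\ed(M,f)}$ as the same limit along the subsequence $c=b\pi+j$ --- so the $j$-independence is automatic, with no case analysis. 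You instead push the leading coefficient through the Stanley decomposition and the rational-function formulas \eqref{onevar}--\eqref{reduct}: Laurent expansion at roots of unity to isolate the order-$(m+d)$ pole, the observation that a single piece $\widetilde x+M_\sigma$ is supported in one class mod $c_\sigma$, the equidistribution of $\deg\widetilde x$ mod $c_\sigma$ over $E_\sigma$ (via surjectivity of $\deg$ on $\gp(M)$) to restore $j$-independence, the Dirichlet/beta integral over a simplex, and the normalization $(d-1)!\,\lambda(P_\sigma)=|E_\sigma|/G$, which is exactly the volume formula behind Proposition~\ref{transform}. All of these steps check out (including the count $|E_\sigma|=[\gp(M):\gp(M_\sigma)]=|\det_L(v_1,\dots,v_d)|$), though you should be explicit that the Dirichlet integral is proved independently by beta integrals, since the paper derives Proposition~\ref{monomial} \emph{from} Proposition~\ref{inte}, so your order of deduction inverts the paper's; also your bookkeeping of residue classes silently mixes moduli ($\pi$, $\ell$, $c_\sigma$), which is harmless only because each $\sigma$-contribution turns out to be class-independent. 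What the two approaches buy: the paper's limit argument is far shorter and avoids all root-of-unity and lattice-normalization bookkeeping, while yours yields as a by-product the explicit triangulation formula for the virtual leading coefficient (essentially Propositions~\ref{transform} and \ref{monomial} combined) and stays close to the implementation; your closing parenthetical ``cross-check'' is, in fact, precisely the paper's proof.
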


\begin{proof}
We may assume that $f$ is homogeneous of degree $m$. Let
$$
L_c=\frac 1c L.
$$
Then
$$
\int_P f_m\,d\lambda=\lim_{c\to \infty} \sum_{x\in P\cap L_c} \frac1{c^{d-1}}f(x)
$$
by elementary integration theory.

Note that
$$
f(x)=\frac 1{c^m}f(cx)
$$
by homogeneity and that $x\in P\cap L_c$ if and only $cx\in
L\cap cP$. Thus
$$
\int_P f_m\,d\lambda=\lim_{c\to \infty} \sum_{y\in cP\cap L} \frac1{c^{m+d-1}}f(y).
$$
On the other hand, we obtain $q^{(j)}_{\ed(M,f)}$ as the limit
over the subsequence $(b\pi +j)_{b\in\ZZ_+}$:
$$
q^{(j)}_{\ed(M,f)}=\lim_{b\to\infty} \sum_{y\in (b\pi+j)P\cap L} \frac1{(b\pi+j)^{m+d-1}} f(y)
$$
by Proposition \ref{quasi}. This concludes the proof.
\end{proof}

In view of Proposition \ref{inte} it is justified to call
$q_{\ed(M,f)}=q^{(j)}_{\ed(M,f)}$ the \emph{virtual leading
coefficient}, and the proposition justifies the term ``expected
degree'' for $\deg f+\rank M-1$ the. In analogy with the
definition of multiplicity in commutative algebra (for example,
see \cite{BH}), we call
$$
\vmult(M,f)=\ed(M,f)!q_{\ed(M,f)}
$$
the \emph{virtual multiplicity of $(M,f)$}. It is an integer if
$P$ is a lattice polytope and $f_m$ has integral coefficients,
as we will see below.

\subsection{Computing the integral}

It is natural to compute the integral by summation over the
triangulation: the triangulation of $\cone(M)$ into simplicial
subcones $\sigma$ induces a triangulation of the polytope $P$
into simplices $\delta=\sigma\cap P$. As usual let
$v_1,\dots,v_d\in M$ be the generators of $\sigma$. Then
$\delta$ is spanned by the degree $1$ vectors $v_i/\deg(v_i)$,
$i=1,\dots,n$. Let $e_1,\dots,e_d$ be the unit vectors in
$\RR^d$. Then the substitution $e_i\mapsto v_i/\deg(v_i)$
induces a linear map $\RR^d\to \RR M$ that in its turn
restricts to an affine map $\alpha$ from the standard degree
$1$ hyperplane in $\RR^d$ spanned by $e_1,\dots,e_d$ to the
hyperplane $H = A_1\cap \RR M$, and the image of the unit
simplex $\Delta$ is just $\delta$.

\begin{proposition}\label{transform}
One has
\begin{equation}\label{inttrans}
\int_\delta f\,d\lambda =\frac{|\det_L(v_1,\dots,v_d)|}{\deg(v_1)\cdots\deg(v_d)}
\int_\Delta (f\circ \alpha)\, d\mu
\end{equation}
where $\mu$ is the $\ZZ^d$-Lebesgue measure on the hyperplane
$\widetilde H$ of standard degree $1$ in $\RR^d$ and
$\det_L(v_1,\dots,v_d)$ is the determinant of the coefficient
matrix of $v_1,\dots,v_d$ with respect to a basis of $L\cap \RR
M$.
\end{proposition}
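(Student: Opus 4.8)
The plan is to reduce the identity to the ordinary affine change-of-variables formula between the $(d-1)$-dimensional affine spaces $\widetilde H\subset\RR^d$ and $H\subset\RR M$, and then to pin down the single scalar that it produces. Here $\alpha$ is the restriction to $\widetilde H$ of the linear isomorphism $\psi\colon\RR^d\to\RR M$ with $\psi(e_i)=v_i/\deg(v_i)$; since $\deg(v_i/\deg v_i)=1$ we indeed have $\psi(\widetilde H)=H$ and $\alpha(\Delta)=\delta$. Elementary integration theory yields a constant $J>0$ with $\lambda(\alpha(S))=J\,\mu(S)$ for every measurable $S\subseteq\widetilde H$, hence $\int_\delta f\,d\lambda=J\int_\Delta(f\circ\alpha)\,d\mu$. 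Because $\Delta=\conv(e_1,\dots,e_d)$ is a basic simplex for the lattice $\ZZ^d\cap\widetilde H$, the normalization of $\mu$ gives $\mu(\Delta)=1/(d-1)!$, so $J=(d-1)!\,\lambda(\delta)$, and it remains to compute $\lambda(\delta)$ for $\delta=\conv(v_1/\deg v_1,\dots,v_d/\deg v_d)$.

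To do this I would introduce the full-dimensional ``pyramid'' $C=\cone(v_1,\dots,v_d)\cap\{x:\deg x\le1\}$ in $\RR M$, carrying the $(L\cap\RR M)$-Lebesgue measure $\nu$ normalized so that a fundamental cell of $L\cap\RR M$ has volume $1$. Writing $C=\{\sum a_iv_i:a_i\ge0,\ \sum a_i\deg v_i\le1\}$ and substituting $b_i=a_i\deg(v_i)$ identifies $C$ with $\psi(T)$, where $T=\conv(0,e_1,\dots,e_d)$ is the standard $d$-simplex. Hence $\nu(C)=|\det\psi|\cdot\mathrm{vol}_{\ZZ^d}(T)=\frac{1}{d!}\cdot\frac{|\det_L(v_1,\dots,v_d)|}{\deg(v_1)\cdots\deg(v_d)}$, the determinant of $\psi$ being taken with respect to the standard basis of $\RR^d$ and a basis of $L\cap\RR M$ (so that it matches both normalizations), which accounts for the factor $1/\prod\deg(v_i)$.

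On the other hand I would compute $\nu(C)$ by slicing along the degree. The key lattice observation is that $\{u_1,\dots,u_{d-1},z\}$ is a $\ZZ$-basis of $L\cap\RR M$: indeed any $w\in L\cap\RR M$ has $\deg w\in\ZZ$ (because $L\subseteq\ZZ^n$) and $w-(\deg w)z\in L\cap\RR M\cap A_0=L_0$. Consequently the degree is precisely the ``height'' coordinate adapted to this basis, Fubini gives $\nu(R)=\int_\RR\lambda^{(k)}(R\cap A_k)\,dk$ with $\lambda^{(k)}$ the translate of $\lambda$ to $A_k\cap\RR M$, and the homothety $x\mapsto kx$ carries $A_1\cap\RR M$ onto $A_k\cap\RR M$ multiplying $(d-1)$-volume by $k^{d-1}$; since $C\cap A_k=k\delta$ for $0\le k\le1$, this gives $\nu(C)=\int_0^1k^{d-1}\lambda(\delta)\,dk=\lambda(\delta)/d$. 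Comparing the two expressions for $\nu(C)$ yields $\lambda(\delta)=\frac{|\det_L(v_1,\dots,v_d)|}{(d-1)!\,\deg(v_1)\cdots\deg(v_d)}$, whence $J=\frac{|\det_L(v_1,\dots,v_d)|}{\deg(v_1)\cdots\deg(v_d)}$ and the proposition follows.

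The step I expect to require the most care is the bookkeeping among the three lattice-normalized measures — $\mu$ on $\widetilde H\subset\RR^d$, $\lambda$ on $H\subset\RR M$, and the auxiliary $\nu$ on $\RR M$ — and in particular the verification that $\{u_1,\dots,u_{d-1},z\}$ really is a $\ZZ$-basis of $L\cap\RR M$. It is this fact that makes the degree serve as the height variable in the Fubini slicing, so that the cross-sections $C\cap A_k$ scale cleanly by $k^{d-1}$ and the two computations of $\nu(C)$ can be matched without stray lattice indices.
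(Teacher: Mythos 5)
Your argument is correct, and its first half is exactly the paper's: both proofs reduce \eqref{inttrans} via the substitution rule to the remark that an affine isomorphism between the hyperplanes $\widetilde H$ and $H$ multiplies the translation-invariant measures $\mu$ and $\lambda$ by a constant, so that after setting $f=1$ only the ratio of $\lambda(\delta)$ to $\mu(\Delta)=1/(d-1)!$ remains to be found. Where you genuinely diverge is at that remaining point: the paper simply quotes $\lambda(\delta)=\frac 1{(d-1)!}\,\frac{|\det_L(v_1,\dots,v_d)|}{\deg(v_1)\cdots\deg(v_d)}$ from \cite[Section 4]{BIS}, whereas you derive it, by computing the $(L\cap\RR M)$-normalized volume of the pyramid $C=\cone(v_1,\dots,v_d)\cap\{\deg\le 1\}$ in two ways: once as $\psi(T)$ with $T$ the standard $d$-simplex, giving $\frac1{d!}\frac{|\det_L(v_1,\dots,v_d)|}{\deg(v_1)\cdots\deg(v_d)}$, and once by Fubini slicing along the degree, using that $u_1,\dots,u_{d-1},z$ is a $\ZZ$-basis of $L\cap\RR M$, giving $\lambda(\delta)/d$. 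This buys a self-contained proof at the cost of a page of lattice bookkeeping, and the bookkeeping is done correctly. One small repair: the integrality $\deg w\in\ZZ$ for $w\in L\cap\RR M$ does not follow merely from $L\subseteq\ZZ^n$, since the grading need not take integer values on all of $\ZZ^n$; the right reason is that $L\cap\RR M=\gp(M)$ (because $M=\cone(M)\cap L$ and $\cone(M)$ is full-dimensional in $\RR M$) and $\deg$ is by hypothesis a $\ZZ$-grading of $\gp(M)$ attaining the value $1$ there. With that emendation your slicing argument, and hence the whole proof, is sound, and it in effect also reproves the volume formula that the paper imports from \cite{BIS}.
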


\begin{proof}
This is just the substitution rule if one observes that the
absolute value of the functional determinant of
$\alpha|\widetilde H$ is given by the factor in front of the
integral. For an affine map the functional determinant is
constant. So we can assume $f=1$ and it remains to relate the
volumes of $\delta$ and $\Delta$. But $\Delta$ has volume
$1/(d-1)!$ with respect to $\mu$ and $\delta$ has volume
$$
\frac 1{(d-1)!}\frac{|\det_L(v_1,\dots,v_d)|}{\deg(v_1)\cdots\deg(v_d)}
$$
with respect to $\lambda$; see \cite[Section 4]{BIS}.
\end{proof}

After the substitution it remains to evaluate the integral over
$\Delta$, and this can be done monomial by monomial:

\begin{proposition}\label{monomial}
\begin{equation}\label{intmon}
\int_\Delta y_1^{m_1}\cdots y_d^{m_d}\, d\mu=\frac{m_1!\cdots m_d!}{(m_1+\dots+m_d+d-1)!}.
\end{equation}
\end{proposition}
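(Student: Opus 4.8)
The plan is to recognize the left-hand side of \eqref{intmon} as a virtual leading coefficient and to read off its value from the Ehrhart series computation already carried out for $\ZZ_+^d$. I would apply Proposition \ref{inte} to the monoid $M=\ZZ_+^d$ equipped with its \emph{standard} degree and to the polynomial $f(y)=y_1^{m_1}\cdots y_d^{m_d}$, which is homogeneous of degree $m=m_1+\dots+m_d$, so that $f_m=f$. Here $\cone(M)=\RR_{\ge0}^d$, hence $P=\cone(M)\cap A_1=\Delta$, while $L=\ZZ^d$ and $\ed(M,f)=m+d-1$.

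The only point requiring care is that the $L$-Lebesgue measure $\lambda$ on $H=A_1\cap\RR^d$ coincides with the measure $\mu$ of the statement. Taking $z=e_1$ as base point and $u_i=e_{i+1}-e_1$ for $i=1,\dots,d-1$, one checks immediately that $u_1,\dots,u_{d-1}$ is a $\ZZ$-basis of $L_0=\ZZ^d\cap A_0$ and that the basic $L_0$-simplex $\conv(z,z+u_1,\dots,z+u_{d-1})$ is precisely the standard simplex $\Delta$. Thus $\lambda$ and $\mu$ both give volume $1/(d-1)!$ to $\Delta$ and hence agree, and Proposition \ref{inte} yields
$$
\int_\Delta y_1^{m_1}\cdots y_d^{m_d}\,d\mu=q_{\ed(M,f)},
$$
the leading coefficient of the Ehrhart quasipolynomial of $(M,f)$.

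It then remains to extract $q_{\ed(M,f)}$ from \eqref{mult2}, which gives $E_{M,f}(t)=m_1!\cdots m_d!/(1-t)^{m+d}$ plus a rational function of strictly smaller pole order at $t=1$. Writing $(1-t)^{-(m+d)}=\sum_{k\ge0}\binom{k+m+d-1}{m+d-1}t^k$ and noting that $\binom{k+m+d-1}{m+d-1}$ is a polynomial in $k$ of degree $m+d-1$ with leading coefficient $1/(m+d-1)!$, while the terms of smaller pole order contribute a quasipolynomial of degree $<m+d-1$, one obtains $q_{\ed(M,f)}=m_1!\cdots m_d!/(m+d-1)!$, as claimed. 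I expect the normalization bookkeeping for $\mu$ in the middle step to be the only non-formal part; everything else is a direct appeal to \eqref{mult2} and Proposition \ref{inte}.

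Should a fully self-contained argument be preferred, one can instead project $\widetilde H$ onto $\RR^{d-1}$ via $(y_1,\dots,y_d)\mapsto(y_1,\dots,y_{d-1})$; this pushes $\mu$ forward to ordinary Lebesgue measure (again because $\Delta$ maps onto the standard simplex of volume $1/(d-1)!$), turning \eqref{intmon} into the classical Dirichlet integral of $y_1^{m_1}\cdots y_{d-1}^{m_{d-1}}(1-y_1-\dots-y_{d-1})^{m_d}$ over the standard simplex in $\RR^{d-1}$. Induction on $d$ then finishes: split off the $y_1$-integration by Fubini, rescale the remaining variables by $1-y_1$, invoke the inductive hypothesis on the resulting $(d-1)$-dimensional integral, and conclude with the Beta evaluation $\int_0^1 y_1^{m_1}(1-y_1)^{N}\,dy_1=m_1!\,N!/(m_1+N+1)!$.
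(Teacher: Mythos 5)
Your main argument is exactly the paper's proof: apply \eqref{mult2} to $M=\ZZ_+^d$ with the standard degree and then identify the leading coefficient of the resulting (quasi)polynomial with $\int_\Delta f\,d\mu$ via Proposition \ref{inte}; your explicit binomial expansion of $(1-t)^{-(m+d)}$ and the check that the $L$-Lebesgue measure on $\widetilde H$ coincides with $\mu$ merely spell out what the paper handles by citing \cite[4.1.9]{BH} and leaves implicit, and both are correct. The alternative Dirichlet/Beta computation you sketch at the end is a fine independent verification but is not needed.
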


\begin{proof}
Let $g=y_1^{m_1}\cdots y_d^{m_d}$ and $M=\ZZ_d^+$. Then
$$
E_{M,g}(t)=\frac{m_1!\cdots m_d!}{(1-t)^{(m_1+\dots+m_d+d)}}
+\textup{terms of smaller pole order at $t=1$},
$$
as stated in \eqref{mult2}.

The quasipolynomial is a true polynomial in this case, and the
(virtual) multiplicity is given by the value of the numerator
polynomial at $t=1$, namely $m_1!\cdots m_d!$ (for example, see
\cite[4.1.9]{BH}). Now Proposition \ref{inte} gives the
integral.
\end{proof}

\section{Computational examples}\label{Compute}

We illustrate the use of NmzIntegrate  by three related
examples coming from combinatorial voting theory that are
discussed in \cite{Sch}. We refer the reader to \cite{LLS},
\cite{Sch} or \cite{WP} for a more extensive treatment.

Consider an election in which each of the $k$ voters fixes a
linear preference order of $n$ candidates. In other words,
voter $i$ chooses a linear order of the candidates $1,\dots,n$.
Each such order represents a permutation of $1,\dots,n$. Set
$N=n!$. The result of the election is an $N$-tuple
$(x_1,\dots,x_N)$ in which $x_p$ is the number of voters that
have chosen the preference order labeled $p$. Then
$x_1+\dots+x_N=k$, and $(x_1,\dots,x_N)$ can be considered as a
lattice point in the positive orthant of $\RR_+^N$, or, more
precisely, as a lattice point in the simplex
$$
U_k^{(n)}=\RR_+^N\cap A_k=k\bigl(\RR_+^N\cap A_1\bigr)=kU^{(n)}
$$
where $A_k$ is the hyperplane defined by $x_1+\dots+x_N=k$, and
$U^{(n)}=U_1^{(n)}$ is the unit simplex of dimension $N-1$
naturally embedded in $N$-space. We assume that all lattice
points in the simplex $U_k^{(n)}$ have equal probability of
being the outcome of the election.

The following three problems have been considered in \cite{Sch}
for $4$ candidates $A,B,C,D$:
\begin{enumerate}
\item the Condorcet paradox,
\item the Condorcet efficiency of plurality voting,
\item plurality voting versus cutoff.
\end{enumerate}
For $n=4$ one has $N=24$, and the dimension of the polytope
$U^{(4)}$ is already quite large.

Let us say that candidate $A$ \emph{beats} candidate $B$ if the
number of voters that prefer candidate $A$ to candidate $B$ is
larger than the number of voters with the opposite preference.
Candidate $A$ is the \emph{Condorcet winner} if $A$ beats all
other candidates. As the Marquis de Condorcet noticed, the
relation ``beats'' is nontransitive for some outcomes of the
election, and there may be no Condorcet winner. This phenomenon
is called the \emph{Condorcet paradox}. Problem (1) asks for
its asymptotic probability as the number $k$ of voters goes to
$\infty$, or even for the precise number of election results
without a Condorcet winner, depending on the number $k$ of
voters.

It is not hard to see that the outcomes that have $A$ is the
Condorcet winner can be described by three homogeneous linear
inequalities $\lambda_i(x)>0$ whose coefficients are given in
Table \ref{ineq1} (relative to the lexicographic order of the
permutations of $A,B,C,D$).
\begin{table}[hbt]
{\small \tabcolsep=1.3pt
\begin{tabular}{rrrrrrrrrrrrrrrrrrrrrrrrr}
$\lambda_1$:&\phantom{-}1&\phantom{-}1&\phantom{-}1&\phantom{-}1&\phantom{-}1&\phantom{-}1&$-1$&$-1$&$-1$&$-1$&$-1$&$-1$&1&1&$-1$&$-1$&1&$-1$&1&1&$-1$&$-1$&1&$-1$\\
$\lambda_2$:&1&1&1&1&1&1&1&1&$-1$&$-1$&1&$-1$&$-1$&$-1$&$-1$&$-1$&$-1$&$-1$&1&1&1&$-1$&$-1$&$-1$\\
$\lambda_3$:&\rule[-1.5ex]{0ex}{2ex}1&1&1&1&1&1&1&1&1&$-1$&$-1$&$-1$&1&1&1&$-1$&$-1$&$-1$&$-1$&$-1$&$-1$&$-1$&$-1$&$-1$\\
\end{tabular}}
\vspace*{2ex} \caption{Inequalities expressing that $A$ beats
the other $3$ candidates}\label{ineq1}
\end{table}
They cut out a rational polytope from $U^{(n)}$, and the
probability of Condorcet's paradox can be computed from the
volume of the polytope. Finding the precise number of election
results without (or with) a Condorcet winner requires the
computation of the Ehrhart function of the semi-open polytope .
Neither Normaliz nor NmzIntegrate can yet compute Ehrhart
series for semi-open polytopes directly, but it is always
possible to fall back on inclusion/exclusion.

We refer the reader to \cite{BIS} for a description of problems
(2) and (3) and for the systems of linear inequalities to be
solved in each case. Normaliz 2.8 can indeed compute the
volumes and the Ehrhart series in dimension $24$ that arise
from tasks (1), (2) and (3) despite the fact that the
triangulations to be evaluated for (2) and (3) are formidable
(see Table \ref{ori} or \cite{BIS}).

As Schürmann \cite{Sch} observed, the computations can be
considerably simplified by exploiting the symmetries in the
inequalities: some variables share the same coefficients in
each inequality, for example the first $6$ variables in Table
\ref{ineq1}. Therefore they can be replaced by their sum, and
the replacement constitutes a  projection of  the original
polytopes, monoids or cones onto objects of smaller dimension.
For the Condorcet paradox the system of inequalities reduces to
Table \ref{ineqsymm}.
\begin{table}[hbt]
{\small \tabcolsep=1.3pt
\begin{tabular}{rrrrrrrrr}
1& -1&  1& 1&  1& -1& -1& -1\\
1&  1& -1& 1& -1&  1& -1& -1\\
1&  1& 1 &-1& -1& -1&  1& -1\\
\end{tabular}}
\vspace*{2ex} \caption{Inequalities exploiting the symmetries
in Table \ref{ineq1}}\label{ineqsymm}
\end{table}
However, instead of simply counting lattice points, one must
now count them with their numbers of preimages. These are given
by polynomials, namely products of binomial coefficients. In
our example the polynomial is
$$
\binom{y_1+5}{5}(y_2+1)(y_3+1)(y_4+1)(y_5+1)(y_6+1)(y_7+1)\binom{y_8+5}{5}
$$
where $y_1=x_1+\dots+x_6$ etc.In other words, the Ehrhart
function (or the volume) of a high dimensional polytope is
replaced by a generalized Ehrhart function of a polytope of
much lower dimension (or the virtual leading coefficient of the
quasipolynomial).

A priori it may not be clear that the replacement of
combinatorial complexity in high dimension by multivariate
polynomial arithmetic in low dimension pays dividends, but this
is indeed the case. Tables \ref{ori} and \ref{proj} compare
both approaches. The computations were run on a SUN xFire 4450
with $20$ parallel threads. If the computations in Table
\ref{ori} are restricted to volumes, they become faster by a
factor of approximately $3$.

\begin{table}[hbt]
\begin{tabular}{|l|r|r|}\hline
\rule[-0.1ex]{0ex}{2.5ex}computation&triangulation size&real time\\ \hline
\rule{0ex}{2.5ex}Condorcet paradox&1,473,107&00:00:30 h\\ \hline
\rule{0ex}{2.5ex}Condorcet efficiency&$347,225,775,338$&218:13:55 h\\ \hline
\rule{0ex}{2.5ex}plurality vs. cutoff&$257,744,341$,008&175:11:26 h\\ \hline
\end{tabular}
\vspace*{2ex} \caption{Computation times (real) for Ehrhart
series in dimension $24$}\label{ori}
\end{table}

\begin{table}[hbt]
\begin{tabular}{|l|r|r|r|r|r|r|}\hline
\rule[-0.1ex]{0ex}{2.5ex}computation&rank&$\deg f$&triangula-&Normaliz&gen Ehrhart&lead coeff\\
\rule[-0.1ex]{0ex}{2.5ex}&&&tion size&time&series time&time\\ \hline
\rule{0ex}{2.5ex}Condorcet paradox&8&16&17&0.05 sec&5.2 sec&0.08 sec\\ \hline
\rule{0ex}{2.5ex}Condorcet efficiency&13&11&17,953&0.41 sec&5:49:29 h&1:54:35 h\\ \hline
\rule{0ex}{2.5ex}plurality vs. cutoff&6&18&3&0.06 sec&18.4 sec&0.54 sec\\ \hline
\end{tabular}
\vspace*{2ex} \caption{Computation times (real) for symmetrized
data}\label{proj}
\end{table}

A welcome side effect of the computations of the generalized
Ehrhart functions is that they have confirmed the results
reported on in \cite{BIS}.

\end{document}